\documentclass{article}

\usepackage{amssymb,amsthm,amsmath}
\usepackage[pdftex]{hyperref}
\newcommand{\Z}{\mathbb{Z}}
\newcommand{\Q}{\mathbb{Q}}
\renewcommand{\O}{\mathcal{O}}

\theoremstyle{definition}
  \newtheorem{definition}{Definition}[section]
  \newtheorem{example}[definition]{Example}

\theoremstyle{theorem}
 \newtheorem{proposition}[definition]{Proposition}
 \newtheorem{lemma}[definition]{Lemma}
 \newtheorem{theorem}[definition]{Theorem}

\title{On Certain Divisibility Property of Polynomials}
\author{Luis F. C\'aceres \& Jos\'e A. V\'elez Marulanda}
\begin{document}
\maketitle
\begin{abstract}
We review the definition of D-rings introduced by H. Gunji \& D. L. MacQuillan. We provide an alternative characterization for such rings that allows us to give an elementary proof of that a ring of algebraic integers is a D-ring. Moreover, we give a characterization for D-rings that are also unique factorization domains to determine divisibility of polynomials using polynomial evaluations.
\end{abstract}
\section{Introduction}
Assume that $f(x)$ and $g(x)$ are polynomials with integer coefficients. Assume also that for all $k\in \Z$ such that $g(k)\not=0$ we have $g(k)|f(k)$ in $\Z$. Can we say that $g(x)|f(x)$ in $\Z[x]$? Certainly this is not always true. For instance, if we consider $p$ any prime, $f(x)=x^p-x$ and $g(x)=p$, using Fermat's little theorem we see that for all $k\in \Z$, $g(k)|f(k)$, but $g(x)\nmid f(x)$ in $\Z[x]$ although $\frac{f(x)}{g(x)}\in \Q[x]$. This motivates the following definition introduced by H. Gunji \& D. L. MacQuillan in \cite{hiroshi}. \begin{definition}
Let $R$ be an integral domain. $R$ is a \textit{D-ring} if given polynomials $f(x)$ and $g(x)$ with coefficients in $R$ such that for almost all $k\in R$, $g(k)|f(k)$ then 
$\frac{f(x)}{g(x)}\in K[x]$,
where $K$ is the field of fractions of $R$.
\end{definition}  
Fields cannot be D-rings: if $F$ is a field, trivially $k|1$ for all nonzero element $k\in F$ but $\frac{1}{x}\not\in F[x]$.  From now on, we always assume all the rings are commutative with identity.

Let $R$ be a ring. For any polynomial $f(x) \in R[x]$ denote $S(f)$ the set of all nonzero prime ideals $\mathfrak{p}$ of $R$ such that the congruence $f(x)\equiv 0\mod\mathfrak{p}$ is solvable in $R$, i.e. there exists $k\in R$ such that $f(k)\in \mathfrak{p}$. In particular, if $c\in R$, $S(c)$ is precisely the set of prime ideals of $R$ that contain $c$.
\begin{proposition}\label{dringeq}
Let $R$ be an integral domain and $R^\times$
the multiplicative group of  $R$ . The following properties are equivalent:
\begin{enumerate}
\item[(i)] $R$ is a D-ring.
\item[(ii)] Every polynomial over $R$ which satisfies $f(k)\in R^\times$ for almost all $k\in R$ must be a constant.
\item[(iii)] For any nonconstant polynomial $f(x)\in R[x]$, the set $S(f)$ is nonempty.
\item[(iv)] For any nonconstant polynomial $f(x) \in R[x]$ and any non-zero $c\in R$, the set $S(f)-S(c)$ is infinite.
\end{enumerate}
\end{proposition}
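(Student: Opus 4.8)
The plan is to prove the cycle of implications $(i)\Rightarrow(ii)\Rightarrow(iii)\Rightarrow(iv)\Rightarrow(i)$. The first two are short. For $(i)\Rightarrow(ii)$: if $f(k)\in R^{\times}$ for almost all $k$ then $f(k)\mid 1$ for almost all $k$, so the $D$-ring property, applied with numerator the constant polynomial $1$ and denominator $f$, gives $1/f(x)\in K[x]$; hence $f$ is a unit in $K[x]$, i.e.\ a nonzero constant. For $(ii)\Rightarrow(iii)$ I argue contrapositively: if $f$ is nonconstant with $S(f)=\emptyset$, then no value $f(k)$ lies in a nonzero prime ideal, and since a nonzero non-unit of an integral domain always lies in some maximal (hence nonzero prime) ideal, $f(k)\in R^{\times}$ whenever $f(k)\neq0$; as a nonzero polynomial over $R$ has only finitely many roots, this forces $f(k)\in R^{\times}$ for almost all $k$, contradicting $(ii)$.

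The core of the argument is $(iii)\Rightarrow(iv)$. Suppose $g$ is nonconstant, $c\neq0$, and $S(g)\setminus S(c)$ is finite; I will contradict $(iii)$. First enlarge $c$: for each of the finitely many primes $\mathfrak{q}\in S(g)\setminus S(c)$ pick a nonzero $b_{\mathfrak{q}}\in\mathfrak{q}$ and replace $c$ by $c\prod_{\mathfrak{q}}b_{\mathfrak{q}}$; since $S(ab)=S(a)\cup S(b)$, we then have $S(g)\subseteq S(c)$. Note $(iii)$ forces $R$ to be infinite (a field has $S(f)=\emptyset$ for every $f$), so after translating the variable we may also assume $c_{0}:=g(0)\neq0$ without changing $S(g)$. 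The key device is the auxiliary polynomial $h(y):=g(c\,c_{0}^{2}\,y)/c_{0}$: it lies in $R[y]$, has constant term $1$ and the same degree as $g$, and satisfies $h(y)-1\in c\,R[y]$, whence $h(k)\equiv1\pmod c$ for all $k$ and so $S(h)\cap S(c)=\emptyset$; on the other hand $c_{0}\,h(k)=g(c\,c_{0}^{2}k)$ shows $h(k)\mid g(c\,c_{0}^{2}k)$, hence $S(h)\subseteq S(g)\subseteq S(c)$. Therefore $S(h)=\emptyset$ with $h$ nonconstant, the desired contradiction.

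The remaining implication $(iv)\Rightarrow(i)$ is the most laborious. Given $g\neq0$ with $g(k)\mid f(k)$ for almost all $k$, divide $f=qg+r$ in $K[x]$ and clear denominators by some $e\in R\setminus\{0\}$; supposing $r\neq0$ toward a contradiction, $h:=er$ satisfies $h\neq0$, $\deg h<\deg g$, and $g(k)\mid h(k)$ for almost all $k$. Dividing $g$ and $h$ by $d:=\gcd_{K[x]}(g,h)$ and clearing denominators once more reduces us to the case where $g$ is nonconstant and $g,h$ are coprime in $K[x]$; then the B\'ezout identity $ug+vh=\mathrm{Res}(g,h)$ with $u,v\in R[x]$ and $\delta:=\mathrm{Res}(g,h)\in R\setminus\{0\}$ gives $g(k)\mid\delta$ for almost all $k$. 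It remains to contradict the situation "$g$ nonconstant, $\delta\neq0$, $g(k)\mid\delta$ for almost all $k$." If $g$ has a root $\rho\in R$, factor $g=(x-\rho)g_{1}$ in $R[x]$; then $(k-\rho)\mid\delta$ for almost all $k$, so all but finitely many elements of $R$ divide $\delta$, and applying $(iv)$ to the polynomial $x$ and the element $\delta$ yields infinitely many nonzero primes not containing $\delta$ — but any such prime, being infinite, still contains an element dividing $\delta$, so $\delta\in\mathfrak{p}$, forcing $\delta=0$. If $g$ has no root in $R$, then for any $\mathfrak{p}\in S(g)\setminus S(\delta)$ every $k$ with $g(k)\in\mathfrak{p}$ must lie in the finite set $E$ of exceptional values (otherwise $g(k)\mid\delta$ puts $\delta\in\mathfrak{p}$), and since some such $k$ exists and $g(k)\neq0$, we get $S(g)\subseteq S(\delta\prod_{k\in E}g(k))$, hence $S(g)\setminus S(\delta\prod_{k\in E}g(k))=\emptyset$, contradicting $(iv)$.

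The step I expect to be the main obstacle is exactly $(iv)\Rightarrow(i)$: reconstructing divisibility of polynomials in $K[x]$ from pointwise divisibility in $R$ forces the detour through polynomial division, denominator-clearing and the resultant, together with the separate disposal of the case in which $g$ has roots in $R$, and this is the technical crux of the proposition. By contrast, the only delicate point in $(iii)\Rightarrow(iv)$ is to hit upon the substitution $y\mapsto c\,c_{0}^{2}y$ followed by the normalization by $c_{0}$.
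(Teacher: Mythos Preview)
The paper does not give its own proof of this proposition: it simply cites \cite[Prop.~1]{hiroshi}. There is therefore nothing in the paper to compare your argument against; you have supplied what the authors chose to outsource.

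Your cycle $(i)\Rightarrow(ii)\Rightarrow(iii)\Rightarrow(iv)\Rightarrow(i)$ is correct, and the substitution trick $h(y)=g(cc_0^2y)/c_0$ for $(iii)\Rightarrow(iv)$ is exactly the device used in the original source. Two places would benefit from one extra sentence. In $(ii)\Rightarrow(iii)$, the parenthetical ``maximal (hence nonzero prime)'' silently assumes $R$ is not a field; you should note that $(ii)$ already excludes fields, since over a field every nonzero polynomial takes unit values at all but finitely many points while $x$ is nonconstant. In $(iv)\Rightarrow(i)$, the phrase ``dividing $g$ and $h$ by $d=\gcd_{K[x]}(g,h)$ and clearing denominators'' is the right move but is a bit compressed: writing $\lambda g=dG$, $\lambda h=dH$ with $\lambda\in R\setminus\{0\}$ and $G,H\in R[x]$ coprime in $K[x]$, one gets $G(k)\mid H(k)$ for almost all $k$ because $d(k)\neq0$ off a finite set and $R$ is a domain; then $\deg G>\deg H\ge 0$ forces $G$ nonconstant, and B\'ezout (or the resultant) produces your $\delta$. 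Finally, in Case~1 the clause ``any such prime, being infinite'' is justified because a nonzero ideal of an infinite domain contains $aR$ for some $a\neq0$ and $r\mapsto ar$ is injective; $R$ itself is infinite since $(iv)$ produces infinitely many primes. With these clarifications the proof is complete.
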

\begin{proof}
See \cite[Prop 1, pg 290]{hiroshi}.
\end{proof}
As a consequence of Proposition \ref{dringeq}, the ring  $\Z[W]$ where $W=\{1/p: p \text{ is prime and } p\equiv 1\mod 4 \text { or } p=2 \}$ is not a D-ring using the nonconstant polynomial $f(x)=x^2+1$ and the fact that for all $k\in \Z[W]$, $f(k)\in \Z[W]^\times$ (see \cite[Example 1, pg 293]{hiroshi}).  

We review the definition of algebraic integers. Let $R$ be a subring of a ring $L$. An element $\alpha\in L$ is {\it integral} over $R$ if there exists a monic polynomial $f(x)\in R[x]$ such that $f(\alpha)=0$. In particular, when $R=\Z$, the element $\alpha$ is said to be an {\it algebraic integer} in $L$. It is well-known that the set $B$ consisting of all the elements that are integral over $R$ is a ring which is called the {\it integral closure} of $R$ in $L$.  In particular,  if $R=\Z$ and $L$ is a field containing $\Z$, the integral closure of $\Z$ in $L$  is called the {\it ring of integers} of $L$, and we denote this ring by $\O_L$. For example, let $d$ be a square-free integer and consider $\Q(\sqrt{d})=\{a+b\sqrt{d}: a, b\in \Q\}$, then  ring of integers in $\Q(\sqrt{d})$ is $\O_{\Q(\sqrt{d})}=\Z[\omega]=\{a+b\omega: a, b\in \Z\}$ where 
\begin{equation*}\label{w}
\omega=\begin{cases} \sqrt{d}, &\text{ if $d\equiv 2, 3\mod 4$}\\
\frac{1+\sqrt{d}}{2}, &\text{ if $d\equiv 1\mod 4$}\end{cases}
\end{equation*}
We say that an integral domain $R$ is {\it integrally closed} if $R$ is equal to its integral closure in its field of fractions. For example, the integral closure of $\Z$ in $\Q$ is itself, which implies that $\Z$ is integrally closed. 

It is stated in \cite[Cor 1, pg 293]{hiroshi} that the following result is a direct consequence of Proposition \ref{dringeq}.
\begin{proposition}\label{propint}
Let $\Q\subseteq L$ be a finite Galois extension of fields. Then the ring of algebraic integers $\O_L$ in $L$ is a D-ring. 
\end{proposition}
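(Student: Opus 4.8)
The plan is to verify condition (iii) of Proposition~\ref{dringeq}: for every nonconstant $f(x)\in\O_L[x]$ the set $S(f)$ is nonempty. The main tool is the norm $N=N_{L/\Q}\colon L\to\Q$, which for the Galois extension $L/\Q$ is $N(\alpha)=\prod_{\sigma\in G}\sigma(\alpha)$, where $G=\mathrm{Gal}(L/\Q)$. I will use three elementary facts: each $\sigma\in G$ restricts to a ring automorphism of $\O_L$ (it carries a root of a monic integer polynomial to another such root), so $N$ restricts to a multiplicative map $\O_L\to\O_L\cap\Q=\Z$; $N(\alpha)=0$ exactly when $\alpha=0$; and $N(u)=\pm1$ whenever $u\in\O_L^\times$, since then $N(u)N(u^{-1})=1$ in $\Z$.

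First I would replace $f$ by a single integer polynomial. Writing $f(x)=\sum a_ix^i$ with $a_i\in\O_L$, put $f^\sigma(x)=\sum\sigma(a_i)x^i$ and $g(x)=\prod_{\sigma\in G}f^\sigma(x)$. Then $g$ is fixed by $G$, hence has coefficients in $\Q$, and it is a product of polynomials over $\O_L$, hence has coefficients in $\O_L$; since $\Z$ is integrally closed in $\Q$, we get $g\in\Z[x]$. Its leading coefficient is $N(\mathrm{lead}(f))\neq0$, so $\deg g=|G|\deg f\ge1$ and $g$ is nonconstant. The purpose of this construction is the identity $g(m)=\prod_{\sigma}\sigma(f(m))=N(f(m))$, valid for every $m\in\Z$: indeed $\sigma$ fixes $\Z$ and is a ring homomorphism, so $f^\sigma(m)=\sigma(f(m))$.

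To finish, I would note that a nonconstant polynomial over $\Z$ satisfies $|g(m)|\to\infty$ as $|m|\to\infty$, so I may choose $m\in\Z$ with $|g(m)|\ge2$. Then $|N(f(m))|\ge2$, so $f(m)$ is neither $0$ nor a unit of $\O_L$; hence $(f(m))$ is a proper nonzero ideal, and any maximal ideal $\mathfrak{p}$ containing it is a nonzero prime with $f(m)\in\mathfrak{p}$. Thus $\mathfrak{p}\in S(f)$, so $S(f)\neq\emptyset$ and $\O_L$ is a D-ring by Proposition~\ref{dringeq}. (The same computation also settles condition (ii) directly: if $f(k)\in\O_L^\times$ for almost all $k\in\O_L$, then in particular for almost all $k\in\Z$, forcing $g(k)=N(f(k))=\pm1$ for infinitely many integers $k$, hence $g$ and therefore $f$ constant.)

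The only genuinely delicate point I anticipate is the reduction from $f$ to $g$: one must be sure that $g$ really lies in $\Z[x]$ — this is where Galois invariance and the integral closedness of $\Z$ in $\Q$ both enter — and that $g(m)$ computes the norm of $f(m)$ on integer arguments. Everything after that is routine. It is precisely the Galois hypothesis that lets $N(\alpha)$ be written as a product of automorphisms of $L$ applied to $\alpha$; for an arbitrary finite extension one would first pass to a Galois closure, but here no such step is needed.
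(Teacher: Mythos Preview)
Your argument is correct. The norm construction $g(x)=\prod_{\sigma\in G}f^\sigma(x)$ is the same object the paper calls $N_{L/\Q}(f)(x)$, and your justification that $g\in\Z[x]$ and that $g(m)=N(f(m))$ for $m\in\Z$ matches Lemma~\ref{polnorm}. The step picking $m$ with $|g(m)|\ge2$ and concluding that $f(m)$ is a nonzero non-unit, hence contained in a nonzero prime, is sound.

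Where you diverge from the paper is in which characterization of D-rings you invoke. You appeal to Proposition~\ref{dringeq}(iii) (or equivalently~(ii)), which is exactly the route the paper deliberately sets out to avoid: the whole point of \S2 is to reprove Proposition~\ref{propint} \emph{without} Proposition~\ref{dringeq}. The paper instead establishes the degree-comparison criterion Proposition~\ref{dringdpp}, uses it to show $\Z$ is a D-ring in an elementary way (Lemma~\ref{Zisadring}), and then proves the more general Proposition~\ref{propint2}: for any integrally closed D-ring $R$, the integral closure of $R$ in a finite Galois extension is again a D-ring. That proof takes a pair $f,g\in C[x]$ with the pointwise divisibility property, pushes them down to $F=N_{L/K}(f),\,G=N_{L/K}(g)\in R[x]$, checks the divisibility property persists on $R$, and applies Proposition~\ref{dringdpp} to compare degrees. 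Your route is shorter and self-contained for the specific target $\O_L$, but it relies on the black-box Proposition~\ref{dringeq}; the paper's route is longer but yields a relative statement (D-ring goes up under integral closure in Galois extensions) and keeps everything elementary.
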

We provide in the following section we prove that $\Z$ is a D-ring (see Lemma \ref{Zisadring}). Then we can see that Proposition \ref{propint} is actually a consequence of the following result. 
\begin{proposition}\label{propint2}
Let $R$ be an integrally closed domain and $K$ be its ring of fractions. Let $K\subseteq L$ be a finite Galois extension of fields and let $C$ be the integral closure of $R$ in $L$. If $R$ is a D-ring then $C$ is also a D-ring. 
\end{proposition}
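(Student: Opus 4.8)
The plan is to verify condition (ii) of Proposition~\ref{dringeq} for $C$: I must show that any $f(x)\in C[x]$ with $f(k)\in C^\times$ for almost all $k\in C$ is constant. The main tool will be the norm polynomial. Writing $G=\mathrm{Gal}(L/K)$ and, for $\sigma\in G$, letting $f^\sigma(x)$ be the polynomial obtained by applying $\sigma$ to each coefficient of $f$, I will set
$$N(f)(x)=\prod_{\sigma\in G} f^\sigma(x),$$
and I may assume $f\neq 0$, since the zero polynomial is constant.

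First I would show that $N(f)(x)\in R[x]$. Since each $\sigma\in G$ fixes $R\subseteq K$ pointwise, it carries an element integral over $R$ to another such element, so $C$ is $G$-stable; hence every $f^\sigma$ lies in $C[x]$ and $N(f)(x)\in C[x]$. Permuting the factors shows that $N(f)$ is $G$-invariant, so its coefficients lie in $K$ (this is where $L/K$ Galois is used), and therefore in $C\cap K=R$, the last equality because $R$ is integrally closed in $K$.

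Next I would transfer the unit condition from $C$ down to $R$. Since $R\subseteq C$, the hypothesis gives $f(k)\in C^\times$ for all but finitely many $k\in R$; for such a $k$ one has $\sigma(k)=k$, hence $f^\sigma(k)=\sigma(f(k))\in C^\times$ (because $\sigma$ restricts to a ring automorphism of $C$), so $N(f)(k)=\prod_{\sigma\in G}\sigma(f(k))\in C^\times$. But $N(f)(k)\in R$ by the previous step, and a unit of $C$ lying in $R$ has its inverse in $C\cap K=R$, so $N(f)(k)\in R^\times$. Thus $N(f)(x)\in R[x]$ takes unit values for almost all $k\in R$, and since $R$ is a D-ring, Proposition~\ref{dringeq}(ii) forces $N(f)$ to be constant. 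A degree count, $\deg N(f)=[L:K]\,\deg f$, then yields $\deg f=0$, so $f$ is constant and $C$ is a D-ring.

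I expect the argument to go through without a serious obstacle once the norm polynomial is in hand; the only delicate point is the identity $C\cap K=R$ (equivalently, that $R$ is integrally closed in $K$), which is used twice — to place $N(f)$ in $R[x]$, and to see that a unit of $C$ lying in $R$ is already a unit of $R$. A minor bookkeeping item is to confirm that ``almost all $k\in C$'' really does pass to ``almost all $k\in R$'', which it does because the exceptional set is finite and $R\subseteq C$.
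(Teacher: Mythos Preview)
Your proof is correct. Both your argument and the paper's hinge on the same device, the norm polynomial $N(f)=\prod_{\sigma\in G}f^\sigma$, and both use $C\cap K=R$ in the same two places. The difference is in which characterization of D-rings is invoked: you verify condition (ii) of Proposition~\ref{dringeq} (unit-valued polynomials are constant) for a single $f$, whereas the paper deliberately avoids Proposition~\ref{dringeq} and instead uses its own Proposition~\ref{dringdpp} (the degree-comparison criterion for a divisibility pair $f,g$), taking norms of both polynomials and checking $G(b)\mid F(b)$ in $R$. Your route is a bit shorter since it tracks only one polynomial, but it leans on Proposition~\ref{dringeq}, which the paper quotes from \cite{hiroshi} without proof; the paper's route is self-contained relative to the results it actually proves, which was its stated aim.
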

In the following section we provide an elementary proof of Proposition \ref{propint2} which proves Proposition \ref{propint} without using Proposition \ref{dringeq}. 

\section{Alternative Characterizations of D-rings}

\begin{proposition}\label{dringdpp}
Let $R$ be an integral domain and let $K$ be its field of fractions. $R$ is a D-ring if and only if for given polynomials $f(x)$ and $g(x)$ in $R[x]$ such that for $k\in R$  $(g(k)\not=0\Rightarrow g(k)|f(k))$  then either $f(x)=0$ or $\deg g\leq \deg f$. 
\end{proposition}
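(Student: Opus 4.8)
The bridge between the two formulations is the division algorithm in $K[x]$ together with clearing of denominators, so I would handle the two implications separately and put all the work into the nontrivial one.

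\emph{D-ring $\Rightarrow$ the stated condition.} This direction should be immediate. If $f,g\in R[x]$ are such that $g(k)\mid f(k)$ for almost all $k\in R$ with $g(k)\neq 0$, then by the definition of a D-ring $f(x)/g(x)\in K[x]$, say $f=hg$ with $h\in K[x]$. Since $K[x]$ is an integral domain, either $f=0$, or $h\neq 0$ and $\deg f=\deg h+\deg g\geq\deg g$, which is exactly the asserted dichotomy.

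\emph{The stated condition $\Rightarrow$ D-ring.} Here is the content. I would assume the degree condition holds for every pair of polynomials over $R$, take $f,g\in R[x]$ with $g\neq 0$ such that $g(k)\mid f(k)$ for almost all $k\in R$ with $g(k)\neq 0$, and show $f/g\in K[x]$. First run the division algorithm in $K[x]$: write $f=qg+r$ with $q,r\in K[x]$ and $r=0$ or $\deg r<\deg g$. Pick $d\in R\setminus\{0\}$ clearing the denominators of the coefficients of $q$ and $r$, so that $dq,\,dr\in R[x]$ and $df=(dq)g+dr$ in $R[x]$. The crucial pointwise step is: for each $k\in R$ with $g(k)\neq 0$ and $g(k)\mid f(k)$ we have $g(k)\mid df(k)$ and $g(k)\mid (dq)(k)g(k)$, hence $g(k)\mid (dr)(k)$. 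Thus the exceptional set for the pair $(dr,g)$ is contained in that for $(f,g)$, so it too is finite, and the degree condition applies to $(dr,g)$: either $dr=0$ or $\deg g\leq\deg(dr)$. But $d\neq 0$ and $R$ is a domain, so $\deg(dr)=\deg r<\deg g$ whenever $r\neq 0$, a contradiction. Hence $dr=0$, so $r=0$, so $f=qg$ and $f/g=q\in K[x]$, as required.

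I expect the only delicate point to be the bookkeeping around ``almost all $k$'': one has to be sure that replacing $f$ by $dr$ does not enlarge the set of exceptional $k$, and that is precisely what the pointwise implication $g(k)\mid f(k)\Rightarrow g(k)\mid(dr)(k)$ provides. The remaining ingredients --- Euclidean division in $K[x]$, clearing denominators, and additivity of degree over an integral domain --- are routine, as are the degenerate cases $g=0$ (the premise is vacuous and the conclusion automatic) and $g$ a nonzero constant (then $\deg g=0\leq\deg f$, and $f/g\in K[x]$ trivially).
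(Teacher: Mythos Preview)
Your overall strategy --- Euclidean division in $K[x]$, clear denominators, then force the remainder to vanish via the degree hypothesis --- matches the paper's. There is, however, a genuine gap in the $(\Leftarrow)$ direction. The hypothesis of the degree condition, as stated in the proposition, is that $g(k)\mid f(k)$ for \emph{every} $k\in R$ with $g(k)\neq 0$, not merely for almost all such $k$. Starting from the D-ring premise (divisibility for almost all $k$), your pointwise argument only yields $g(k)\mid (dr)(k)$ for almost all $k$, so you are not yet entitled to invoke the degree condition on the pair $(dr,g)$; saying ``the exceptional set is finite'' is not enough.

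The paper closes this gap before dividing: list the finitely many exceptional points $k_1,\dots,k_s$ with $g(k_i)\neq 0$, set $\beta=g(k_1)\cdots g(k_s)$, and observe that $g(k)\mid \beta f(k)$ now holds for \emph{every} $k$ with $g(k)\neq 0$. One then runs your division-and-clear-denominators argument with $\beta f$ in place of $f$; the extra constant $\beta$ is absorbed into the $K[x]$-quotient at the end. With this single correction your proof is complete and essentially the same as the paper's (the paper uses pseudo-division by a power of the leading coefficient of $g$ where you clear denominators with a single $d$, but the effect is identical).
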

\begin{proof}
$(\Rightarrow)$ Assume that $R$ is a D-ring.  Let $g(x),f(x) \in R[x]$ such that for all $k\in R$ with $(g(k)\not=0 \Rightarrow g(k)\vert f(k))$. Since $g(x)$ has finitely many zeros, $g(k)\vert f(k)$ for almost all $k \in R$. Since $R$ is a D-ring, $\frac{f(x)}{g(x)}\in K[x]$.  Then there exists $p(x)\in K[x]$ such that $f(x)=p(x)g(x)$. Suppose $f(x)\not= 0$, then $\deg f = \deg (pg) = \deg p + \deg g\geq \deg g$. $(\Leftarrow)$ Let $g(x),f(x)\in R[x]$ such that for almost all $k\in R$, $g(k)\vert f(k)$. Let $A=\{k_1,\ldots,k_n\}$ be a finite subset of $R$ such that $g(k)\vert f(k)$ for all $k \in R-A$. Let $k_1,\ldots,k_s \in A$ such that $g(k_i)\not= 0$ for $i=1,\ldots,s$ and let $\beta = g(k_1)\cdots g(k_s)$. If $s=0$, let $\beta = 1$. Consequently, for all $k\in R$ $(g(k)\not= 0  \Rightarrow g(k)\vert \beta f(k))$. By hypothesis, $\beta f(x)=0$ or $\deg g \leq \deg \beta f$. If $\beta f(x)=0$, then $f(x)=0$, which trivially implies $\frac{f(x)}{g(x)}\in K[x]$.  Suppose $\deg g \leq \deg \beta f$ and let $g(x)= a_nx^n+\cdots+a_0$. By the division algorithm there exist $q(x),r(x) \in K[x]$ and $s\in \Z^+$ such that $a_n^s\beta f(x)= g(x)q(x)+r(x)$, with $r(x)=0$ or $\deg r < \deg g$. Let $\alpha=a_n^s\beta$ and  suppose that $\deg r<\deg g$. Therefore for all $k\in R$ with $g(k)\not = 0$ we have both $g(k)\vert\alpha f(k)$ and $g(k)\vert g(k)q(k)$, implying $g(k)\vert r(k)$. Using the hypothesis for the polynomials $g(x)$ and $r(x)$  we obtain $r(x) = 0$ or $\deg r \geq \deg g$. Therefore $r(x) = 0$ and hence $\alpha f(x) = g(x)q(x)$. It follows that $\frac{f(x)}{g(x)} = \alpha^{-1}q(x)\in K[x]$. In others words, $R$ is a $D$-ring.
\end{proof}

\begin{lemma}\label{Zisadring}
Let $g(x),f(x)\in \Z[x]$ such that for $k\in \Z$, $(g(k)\not=0\Rightarrow g(k)|f(k))$. Then $f(x)=0$ or $\deg g\leq \deg f$. Consequently, $\Z$ is a D-ring.
\end{lemma}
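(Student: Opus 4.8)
The plan is to establish the displayed divisibility assertion directly by an elementary contradiction argument comparing growth rates, and then to read off ``$\Z$ is a D-ring'' as an immediate application of Proposition~\ref{dringdpp}.

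For the main assertion I would argue by contradiction. Suppose $g(x),f(x)\in\Z[x]$ satisfy $g(k)\mid f(k)$ for every $k\in\Z$ with $g(k)\neq 0$, yet $f(x)\neq 0$ and $\deg g>\deg f$. Then $\deg g\geq 1$, so $g$ is nonconstant and has only finitely many integer roots, while the nonzero polynomial $f$ likewise has only finitely many roots. The key point is that, since $\deg f<\deg g$, we have $f(k)/g(k)\to 0$ as $|k|\to\infty$; hence there is a positive integer $N$ such that for every integer $k$ with $|k|\geq N$ one has simultaneously $g(k)\neq 0$ and $|f(k)|<|g(k)|$. For any such $k$ the hypothesis yields $f(k)=g(k)\,t$ for some $t\in\Z$, and $|f(k)|<|g(k)|$ forces $t=0$, so $f(k)=0$. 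Thus $f$ would vanish at infinitely many integers, contradicting $f\neq 0$. This shows $f(x)=0$ or $\deg g\leq\deg f$.

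I expect the only step requiring any care to be the uniform choice of $N$: it must be taken large enough both to avoid the finitely many integer roots of $g$ and to make $|f|$ strictly smaller than $|g|$, after which one uses the trivial remark that $0$ is the only integer multiple of $g(k)$ of absolute value less than $|g(k)|$. None of this is deep — which is exactly why the argument qualifies as elementary.

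Finally, the displayed hypothesis of the lemma is precisely the condition on $f$ and $g$ appearing in the criterion of Proposition~\ref{dringdpp}, and its conclusion is precisely that criterion's conclusion; since we have verified this criterion for $R=\Z$, Proposition~\ref{dringdpp} gives that $\Z$ is a D-ring.
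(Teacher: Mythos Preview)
Your argument is correct and follows essentially the same route as the paper: a contradiction via growth rates, noting that for large $|k|$ one has $|f(k)|<|g(k)|$, so the assumed divisibility forces $f(k)=0$, and then invoking Proposition~\ref{dringdpp} for the D-ring conclusion. Your formulation is in fact a bit more careful than the paper's, which picks a single large $k$ and asserts $g(k)\nmid f(k)$ without explicitly excluding the possibility $f(k)=0$; your passage to infinitely many zeros of $f$ handles that edge case cleanly.
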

\begin{proof}
Let $g(x)= a_nx^n+\cdots + a_1x+a_0$ and $f(x)=b_mx^m+\cdots + b_1x+b_0$ be polynomials in $\Z[x]$ such that $f(x)\not=0$ and $\deg g=n>\deg f$. Without loss of generality, assume $a_n,b_m > 0$. Then  we can find $k\in \Z$ large enough such that $g(k)\not=0$ and $a_nk^n+\cdots+a_1k+ a_0 > b_mk^m+\cdots+b_1k+ b_0$ which implies that $g(k)\nmid f(k)$. This proves the first conclusion of Lemma \ref{Zisadring} by contradiction. The second conclusion follows from Proposition \ref{dringdpp}
\end{proof}

\begin{proposition}\label{intelements}
Let $R$ be an integral domain and $K$ be its field of fractions. Assume that $K\subseteq L$ is a finite Galois extension of fields and let $C$ be the integral closure of $R$ in $L$. Then $\sigma(C)=C$ for all $\sigma \in \text{Gal}(L/K)$. Moreover, if $R$ is integrally closed, then $R=\{b\in C:\sigma(b)=b, \text{ for all } \sigma \in \text{Gal}(L/K)\}$.
\end{proposition}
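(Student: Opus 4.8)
The plan is to prove the two assertions in turn, each by a short direct argument that uses only the definition of integrality and the fundamental theorem of Galois theory.

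For the first claim, I would fix $\sigma\in\text{Gal}(L/K)$ and an element $b\in C$. By definition of $C$ there is a monic polynomial $f(x)=x^n+c_{n-1}x^{n-1}+\cdots+c_0\in R[x]$ with $f(b)=0$. Since $\sigma$ is a ring homomorphism of $L$ that fixes $K$ pointwise, it fixes each coefficient $c_i\in R\subseteq K$; hence applying $\sigma$ to the equation $f(b)=0$ yields $\sigma(b)^n+c_{n-1}\sigma(b)^{n-1}+\cdots+c_0=0$, so $\sigma(b)$ is integral over $R$ and therefore $\sigma(b)\in C$. This proves $\sigma(C)\subseteq C$; applying the same reasoning to $\sigma^{-1}$ gives $C=\sigma\bigl(\sigma^{-1}(C)\bigr)\subseteq\sigma(C)$, and hence $\sigma(C)=C$.

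For the second claim, assume $R$ is integrally closed and write $C^G=\{b\in C:\sigma(b)=b\text{ for all }\sigma\in\text{Gal}(L/K)\}$. The inclusion $R\subseteq C^G$ is immediate: any $b\in R$ lies in $C$ because $R\subseteq C$, and it is fixed by every element of $\text{Gal}(L/K)$ since the Galois group fixes $K\supseteq R$ pointwise. Conversely, let $b\in C^G$. Because $L/K$ is Galois, the fixed field of $\text{Gal}(L/K)$ equals $K$, so $b\in K$; and $b\in C$ means $b$ is integral over $R$. Since $R$ is integrally closed in its field of fractions $K$ and $b\in K$ is integral over $R$, we conclude $b\in R$. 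Hence $C^G\subseteq R$, and the two sets coincide.

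I do not expect a serious obstacle here. The only place a hypothesis beyond formal manipulation is genuinely needed is the containment $C^G\subseteq R$, which simultaneously uses that $R$ is integrally closed and that $L^{\text{Gal}(L/K)}=K$ (the Galois assumption); the rest reduces to the observation that $K$-automorphisms of $L$ transport monic relations over $R$ to monic relations over $R$, hence preserve integrality over $R$.
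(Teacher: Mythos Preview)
Your argument is correct and is the standard proof of this well-known fact. Note, however, that the paper does not actually supply its own proof of this proposition: immediately after the statement it simply writes ``For a proof of Proposition~\ref{intelements} see for example \cite[Prop 2.19]{lorenzini}.'' So there is nothing substantive to compare against; the proof you wrote is precisely the direct argument one finds in the cited reference (integrality is preserved by $K$-automorphisms because they fix the coefficients of a monic relation, and the fixed ring $C^G$ sits inside $L^G=K$ and hence inside $R$ by integral closedness).
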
 
For a proof of Proposition \ref{intelements} see for example \cite[Prop  2.19]{lorenzini}.

Let $R$, $K$, $L$ and $C$ as in the hypothesis of Proposition \ref{intelements}. Assume that $p(x)=\alpha_nx^n+\cdots+a_1x+a_0\in L[x]$ and  $\sigma\in \text{Gal}(L/K)$ are arbitrary and let  $p_\sigma(x)=\sigma(\alpha_n)x^n+\cdots\sigma(\alpha_1)x+\sigma(\alpha_0)$. Observe that if $p(x)=r(x)s(x)$ with $r(x), s(x) \in L[x]$ then $p_\sigma(x)=r_\sigma(x)s_\sigma(x)$. Moreover, if $a\in R$ then $p_\sigma(a)=\sigma(p(a))$.  It follows from Proposition \ref{intelements} that $p_\tau(x)=p(x)$ for all $\tau\in \text{Gal}(L/K)$ if and only if $p(x)\in R[x]$. Let $N_{L/K}(p)(x)=\prod_{\sigma\in \text{Gal}(L/K)}p_\sigma(x)$. It follows that for all $a\in R$, $N_{L/K}(p)(a)=\prod_{\sigma\in \text{Gal}(L/K)}p_\sigma(a)=\prod_{\sigma\in \text{Gal}(L/K)}\sigma(p(a))$. 
\begin{lemma}\label{polnorm}
Let $R$, $K$, $L$ and $C$ as in the hypothesis of Proposition \ref{intelements} and let $p(x)\in C[x]$ be arbitrary.
\begin{itemize}
\item[(i)] $\deg N_{L/K}(p)=\left| \text{Gal}(L/K)\right| \deg p$.
\item[(ii)] $N_{L/K}(p)(x)=0$ if and only if $p(x)=0$.
\item[(iii)] $N_{L/K}(p)(x)\in R[x]$.
\item[(iv)] $N_{L/K}(p)(a)\in R$ for all $a\in R$. 
\end{itemize}
\end{lemma}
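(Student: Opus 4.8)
The plan is to dispatch parts (i) and (ii) from the basic fact that $L[x]$ is an integral domain together with the observation that each $\sigma\in\text{Gal}(L/K)$ acts on $L[x]$ as a degree-preserving ring automorphism, and then to spend the effort on (iii), from which (iv) is essentially immediate.

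For (i), I would assume $p(x)\neq 0$ (the case $p(x)=0$ being subsumed in (ii)). Since $\sigma$ is injective it sends the nonzero leading coefficient of $p(x)$ to a nonzero element, so $\deg p_\sigma=\deg p$ for every $\sigma$. Because $L$ is a field, $L[x]$ is an integral domain, and so the degree of the finite product $N_{L/K}(p)=\prod_{\sigma}p_\sigma$ is the sum of the degrees of its factors, namely $\sum_{\sigma\in\text{Gal}(L/K)}\deg p_\sigma=\left|\text{Gal}(L/K)\right|\deg p$. For (ii), if $p(x)=0$ then the factor $p_{\text{id}}(x)=p(x)=0$ already makes the whole product $0$; conversely, since $L[x]$ is an integral domain, $N_{L/K}(p)(x)=0$ forces $p_\sigma(x)=0$ for some $\sigma$, and applying the automorphism $\sigma^{-1}$ to the coefficients — using the bookkeeping identity $(p_\sigma)_\tau=p_{\tau\sigma}$, so that $(p_\sigma)_{\sigma^{-1}}=p_{\text{id}}=p$ — gives $p(x)=0$.

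The substance is in (iii). First, since $p(x)\in C[x]$ and $\sigma(C)=C$ for every $\sigma\in\text{Gal}(L/K)$ by Proposition \ref{intelements}, each $p_\sigma(x)$ again has coefficients in $C$, and as $C$ is a ring the product $N_{L/K}(p)(x)=\prod_{\sigma}p_\sigma(x)$ lies in $C[x]$. Next I would show that $N_{L/K}(p)(x)$ is fixed coefficientwise by every $\tau\in\text{Gal}(L/K)$: applying $\tau$ to coefficients commutes with multiplication of polynomials, so $N_{L/K}(p)_\tau(x)=\prod_{\sigma}(p_\sigma)_\tau(x)=\prod_{\sigma}p_{\tau\sigma}(x)$, and since $\sigma\mapsto\tau\sigma$ is a bijection of $\text{Gal}(L/K)$ this reindexes to $\prod_{\sigma'}p_{\sigma'}(x)=N_{L/K}(p)(x)$. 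Thus $N_{L/K}(p)(x)$ has coefficients in $C$ that are fixed by the whole Galois group, whence by the remark preceding this lemma (which is exactly Proposition \ref{intelements} applied coefficientwise, using that $R$ is integrally closed) we conclude $N_{L/K}(p)(x)\in R[x]$.

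Finally, (iv) follows at once: by (iii) the polynomial $N_{L/K}(p)(x)$ has coefficients in $R$, hence $N_{L/K}(p)(a)\in R$ for every $a\in R$; alternatively one can argue directly that $N_{L/K}(p)(a)=\prod_{\sigma}\sigma(p(a))$ is the field norm of the element $p(a)\in C$, which is integral over $R$ and $\text{Gal}(L/K)$-invariant, hence an element of $R$. I do not expect a real obstacle anywhere; the only point needing care is the bookkeeping for the action of $\text{Gal}(L/K)$ on $L[x]$ — the identities $(rs)_\sigma=r_\sigma s_\sigma$, $(p_\sigma)_\tau=p_{\tau\sigma}$, and $p_\sigma(a)=\sigma(p(a))$ for $a\in R$ — together with the reminder that Galois-invariance of the coefficients only places them in $K$, so one must additionally know they lie in $C$ (because $\sigma(C)=C$) and then invoke that $R$ is the fixed subring, i.e. the integrally closed hypothesis.
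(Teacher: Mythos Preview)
Your proof is correct and follows essentially the same approach as the paper: both dismiss (i) and (ii) as immediate from the definition (you spell out the integral-domain argument explicitly, the paper simply asserts it), both prove (iii) by first noting $N_{L/K}(p)(x)\in C[x]$ via $\sigma(C)=C$ and then showing Galois-invariance of the product by reindexing $\sigma\mapsto\tau\sigma$, and both obtain (iv) as an immediate consequence of (iii). Your version is simply more detailed in recording the bookkeeping identities $(rs)_\sigma=r_\sigma s_\sigma$ and $(p_\sigma)_\tau=p_{\tau\sigma}$.
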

\begin{proof}
Statements (i) and (ii) follow directly from the definition of $N_{L/K}(p)(x)$. Let $q(x)=N_{L/K}(p)(x)$ (note that $q(x)\in C[x]$). Let $\tau\in \text{Gal}(L/K)$ be fixed but arbitrary. Then $\tau\circ \sigma\in\text{Gal}(L/K)$ and  ${(p_{\tau})}_\sigma(x)=p_{\tau\circ \sigma}(x)$ for all $\sigma\in \text{Gal}(L/K)$. Note that $\tau$ induces a permutation of the finite group $\text{Gal}(L/K)$. Then $q_\tau(x)=\prod_{\tau\circ\sigma\in \text{Gal}(L/K)}p_{\tau\circ \sigma}(x)=q(x)$ which implies $q(x)\in R[x]$ proving (iii). Note that (iv) is a direct consequence of (iii).
\end{proof}
\begin{proof}[Proof of Proposition \ref{propint2}]
Let $f(x),g(x)\in C[x]$ be such that for all $k\in C$ $(g(k)\not=0 \Rightarrow g(k)|f(k))$. Consider $F(x)=N_{L/K}(f)(x)$ and $G(x)=N_{L/K}(g)(x)$. By Lemma \ref{polnorm}, $F(x), G(x)\in R[x]$. Let $b\in R$ such that $G(b)\not=0$. Then $g(b)\not=0$ which implies $g(b)|f(b)$. Therefore $\sigma(g(b))|\sigma(f(b))$ for all $\sigma \in \text{Gal}(L/K)$. Using properties of divisibility together with Lemma \ref{polnorm} we obtain  
\[G(a)=\left(\prod_{\sigma\in \text{Gal}(L/K)}\sigma(g(a))\right)\mid\left(\prod_{\sigma\in \text{Gal}(L/K)}\sigma(f(a)))\right)=F(a)\]. 
We have proved that for every $b\in R$, $(G(b)\not=0 \Rightarrow G(b)|F(b))$. Using Proposition \ref{dringdpp}, we obtain that either  $\deg G\leq \deg F$ or $F(x)=0$. Using Lemma \ref{polnorm} we obtain $\deg g\leq \deg f$ or $f(x)=0$. It follows from Proposition \ref{dringdpp} that $C$ is a D-ring.
\end{proof}

It is clear that Proposition \ref{propint} follows directly from Proposition \ref{propint2} together with Lemma \ref{Zisadring}. 

In the following, we give a characterization of D-rings that are also unique factorization domains.

Let $R$ be a unique factorization domain.  For all $p(x)\in R[x]$ we denote by $C(p(x))$ the {\it content} of $p(x)$, i.e. the greatest common divisor of the coefficients of $p(x)$. Remember that $p(x)\in R[x]$ is said to be \textit{primitive} if $C(p(x))$ is a unit. Gauss' lemma states that the product of two primitive polynomials over a unique factorization domain is also primitive. This implies that if $f(x), g(x)$ and $h(x)$ are polynomials in $R[x]$ with $g(x)$ primitive and $mh(x)=f(x)g(x)$ for some $m\in R$, there exists a polynomial $q(x)\in R[x]$ such that $f(x)=mq(x)$. 
\begin{proposition}\label{dringepp}
Let $R$ be a unique factorization domain and let $K$ its field of fractions. $R$ is a D-ring if and only if given $f(x),g(x) \in R[x]$ with $g(x)$ nonconstant and primitive such that
for all $k \in R$, $(g(k)\not=0 \Rightarrow g(k)\vert f(k))$, then $g(x)\vert f(x)$ in $R[x]$. 
\end{proposition}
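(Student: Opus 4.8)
The statement is an equivalence, and I would prove the two implications separately. The forward implication ($R$ a D-ring $\Rightarrow$ the divisibility property) combines the definition of a D-ring with the consequence of Gauss' lemma recorded just before the proposition; the backward implication is a short reduction to Proposition \ref{dringdpp}.

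For $(\Rightarrow)$, suppose $R$ is a D-ring and take $f,g\in R[x]$ with $g$ nonconstant and primitive such that $g(k)\neq 0\Rightarrow g(k)\mid f(k)$ for all $k\in R$. Since $R$ is a domain and $\deg g\geq 1$, the polynomial $g$ has only finitely many roots in $R$, so $g(k)\neq 0$, and hence $g(k)\mid f(k)$, for almost all $k\in R$; the D-ring hypothesis then gives $f(x)/g(x)\in K[x]$. Clearing denominators of the coefficients, write $f(x)/g(x)=q(x)/m$ with $q(x)\in R[x]$ and $m\in R\setminus\{0\}$, so that $m\,f(x)=q(x)\,g(x)$ in $R[x]$. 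Because $g$ is primitive, the Gauss' lemma consequence quoted above produces $q_{0}(x)\in R[x]$ with $q(x)=m\,q_{0}(x)$; cancelling the nonzero $m$ in the domain $R[x]$ yields $f(x)=g(x)\,q_{0}(x)$, i.e. $g(x)\mid f(x)$ in $R[x]$ (the case $f=0$ being trivial).

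For $(\Leftarrow)$, assume the divisibility property holds. By Proposition \ref{dringdpp} it is enough to show that whenever $f,g\in R[x]$ satisfy $g(k)\neq 0\Rightarrow g(k)\mid f(k)$ for all $k\in R$, then $f=0$ or $\deg g\leq\deg f$. Assume $f\neq 0$. If $g$ is constant, then $\deg g=0\leq\deg f$ and we are done, so assume $g$ is nonconstant and factor $g(x)=c\,g_{0}(x)$ with $c=C(g)\in R\setminus\{0\}$ and $g_{0}$ primitive; then $g_{0}$ is nonconstant and $\deg g_{0}=\deg g$. For every $k$ with $g_{0}(k)\neq 0$ we have $g(k)=c\,g_{0}(k)\neq 0$, hence $g(k)\mid f(k)$ and therefore $g_{0}(k)\mid f(k)$. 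Applying the hypothesis to the pair $(f,g_{0})$ gives $f(x)=g_{0}(x)h(x)$ for some $h(x)\in R[x]$, and since $f\neq 0$ we get $h\neq 0$, so $\deg f=\deg g_{0}+\deg h\geq\deg g_{0}=\deg g$. Proposition \ref{dringdpp} then shows $R$ is a D-ring.

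I do not expect a serious obstacle here; the points that require care are all bookkeeping. In $(\Rightarrow)$ one must pass correctly from the statement ``$f/g\in K[x]$'' to a genuine factorization in $R[x]$, and this is precisely where primitivity of $g$ and Gauss' lemma enter; in $(\Leftarrow)$ one must dispose of the degenerate cases $f=0$ and $g$ constant, and must notice that dividing out the content $c$ changes neither the zero set of $g$ nor the divisibility hypothesis. If anything is the ``main step'', it is the invocation of the Gauss' lemma consequence in the forward direction.
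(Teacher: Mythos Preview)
Your proof is correct. The forward direction is essentially identical to the paper's: both use the D-ring property to get $f/g\in K[x]$, clear denominators, and then invoke the Gauss' lemma consequence to land in $R[x]$.

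In the backward direction you take a slightly different, and somewhat cleaner, path. The paper verifies the D-ring definition directly: starting from the ``almost all $k$'' hypothesis, it introduces an auxiliary constant $\beta$ (the product of the nonzero values $g(k_i)$ on the exceptional set) so that $g(k)\mid\beta f(k)$ for \emph{all} $k$ with $g(k)\neq 0$, then factors out the content and applies the hypothesis to the primitive part to obtain $f/g\in K[x]$ explicitly. You instead route through Proposition~\ref{dringdpp}, whose hypothesis is already phrased as ``for all $k$ with $g(k)\neq 0$'', which lets you skip the $\beta$ construction entirely and reduce the question to a degree comparison. Both arguments share the same core move (strip the content, apply the assumed property to the primitive part), so the difference is organizational rather than conceptual; your version buys a shorter argument at the cost of an extra dependency on Proposition~\ref{dringdpp}, while the paper's version is self-contained from the definition.
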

\begin{proof}
$(\Rightarrow)$. Let $f(x),g(x) \in R[x]$ with $g(x)$ nonconstant primitive such that for all $k \in R$, $g(k)\not=0 \Rightarrow g(k)\vert f(k)$. It is clear that for almost all $k\in R$, $g(k)\vert f(k)$. Since $R$ is a D-ring we have that $\frac{f(x)}{g(x)}=p(x)\in K[x]$. Let $p(x) = \frac{r_n}{s_n}x^n+\frac{r_{n-1}}{s_{n-1}}x^{n-1}+\cdots+\frac{r_1}{s_1}x+\frac{r_0}{s_0}$, where $r_i,s_i \in R$, with $s_i \not= 0$ for all $i = 0,\ldots,n$.
Let $m = s_0s_1\cdots s_n$, therefore  $mp(x)\in R[x]$. Take $h(x)=mp(x)$. We have $mf(x) = mp(x)g(x)=h(x)g(x)$, with $g(x)$ primitive. Then there exists $q(x)\in R[x]$ such that $h(x) = mq(x)$, and so $mf(x) = mq(x)g(x)$.
Therefore $f(x) = q(x)g(x)$, with $q(x) \in R[x]$; i.e. $g(x)\vert f(x)$ in $R[x]$. $(\Leftarrow)$.  Let $f(x),g(x) \in R[x]$ such that for almost all $k\in R$ we have that $g(k)\vert f(k)$. Let $A=\{k_1,\ldots,k_n\}$ be a finite subset of $R$ such that $g(k)\vert f(k)$ for all $k \in R-A$. Let $k_1,\ldots,k_s \in A$ such that $g(k_i)\not= 0$ for $i=1,\ldots,s$ and let $\beta = g(k_1)\cdots g(k_s)$. If $s=0$, let $\beta = 1$. Then for all $k\in R$ such that $g(k)\not= 0$ we have $g(k)\vert \beta f(k)$.  We can write $g(x)=\alpha h(x)$ where $h(x)$ is primitive with $\deg h=\deg g \geq 1$ and $\alpha$ is the content of $g(x)$.
Let $k \in R$ such that $h(k)\not=0$. Therefore $g(k)\not=0$ and $g(k)\vert \beta f(k)$; but $h(k)\vert g(k)$, so $h(k)\vert\beta f(k)$. By hypothesis, $h(x)\vert\beta f(x)$ in $R[x]$. Hence, there exists $p(x)\in R[x]$ such that $\beta f(x)=p(x)h(x)$ and so $\alpha\beta f(x)= p(x)(\alpha h(x)) = p(x)g(x)$. Therefore $f(x) = {(\alpha\beta)}^{-1}p(x)g(x)$ where ${(\alpha\beta)}^{-1}p(x) \in K[x]$, i.e. $\frac{f(x)}{g(x)}\in K[x]$. Hence, $R$ is a D-ring.
\end{proof}
The following result provides a  positive answer to our original question about divisibility of polynomials with integer coefficients.
\begin{theorem}
If given $f(x),g(x) \in \Z[x]$ with $g(x)$ primitive, $\deg g(x)\geq 1$ and
for all $k \in \Z$, $(g(k)\not=0 \Rightarrow g(k)\vert f(k))$, then $g(x)\vert f(x)$ in $\Z[x]$.
\end{theorem}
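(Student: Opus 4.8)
The plan is to obtain this statement as the immediate specialization of Proposition~\ref{dringepp} to the ring $R=\Z$. First I would record the two facts that make that proposition applicable: $\Z$ is a unique factorization domain (the fundamental theorem of arithmetic), so Proposition~\ref{dringepp} is available with $K=\Q$; and $\Z$ is a D-ring, which is exactly the content of Lemma~\ref{Zisadring}. With these in hand, the hypotheses of the theorem — namely $f(x),g(x)\in\Z[x]$ with $g(x)$ primitive, $\deg g\geq 1$ (i.e.\ $g$ nonconstant), and $g(k)\mid f(k)$ for every $k\in\Z$ with $g(k)\neq 0$ — match verbatim the hypotheses in the forward implication $(\Rightarrow)$ of Proposition~\ref{dringepp}. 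That implication then yields $g(x)\mid f(x)$ in $\Z[x]$, which is the desired conclusion. There is nothing further to verify.

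Because all of the substantive work has already been carried out, I do not expect any genuine obstacle at this stage; the theorem is precisely the concrete payoff that the preceding machinery was built to deliver. The two places where real effort was spent are, first, Lemma~\ref{Zisadring}, whose growth estimate shows that a nonzero $f$ of degree smaller than $g$ cannot satisfy $g(k)\mid f(k)$ for large $k$, thereby certifying that $\Z$ is a D-ring; and second, Proposition~\ref{dringepp}, whose proof clears denominators from $f(x)/g(x)\in\Q[x]$ and invokes Gauss's lemma (in the content-clearing form recalled just before that proposition) to push the rational quotient back into $R[x]$ once $g$ is known to be primitive.

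For completeness I would optionally unwind the two citations into a single self-contained argument: the hypothesis gives $g(k)\mid f(k)$ for all but finitely many $k$, so by the definition of a D-ring (equivalently, by Proposition~\ref{dringdpp}) we get $f(x)/g(x)\in\Q[x]$; writing this quotient over a common denominator $m\in\Z$ produces $m f(x)=h(x)g(x)$ with $h(x)\in\Z[x]$, and then Gauss's lemma, using primitivity of $g$, furnishes $q(x)\in\Z[x]$ with $h(x)=mq(x)$, whence $f(x)=q(x)g(x)$. I would close with the remark that both hypotheses on $g$ are essential: the introductory example $f(x)=x^{p}-x$, $g(x)=p$ (a constant polynomial of content $p$, hence neither primitive nor of degree $\geq 1$) satisfies $g(k)\mid f(k)$ for every $k\in\Z$ by Fermat's little theorem yet has $g(x)\nmid f(x)$ in $\Z[x]$, showing that the primitivity and degree conditions cannot simply be dropped.
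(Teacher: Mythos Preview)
Your proof is correct and matches the paper's approach exactly: the paper's proof is the single sentence that the result follows from Proposition~\ref{dringepp} (since $\Z$ is a unique factorization domain) together with Lemma~\ref{Zisadring} (which shows $\Z$ is a D-ring). Your optional unwinding and the concluding remark on the necessity of the hypotheses on $g$ go beyond what the paper records, but they are accurate elaborations rather than deviations.
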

\begin{proof}
Since $\Z$ is a unique factorization domain, the result follows from Proposition \ref{dringepp} together with Lemma \ref{Zisadring}.
\end{proof}
\begin{example}
For all $n\geq 0$ consider $p_n(x)$ and $q_n(x)$ defined as follows.
\begin{align*}
p_0(x)&=1,& p_1(x)&=x,& p_{n+1}(x)&=2xp_n(x)-p_{n-1}(x),\\
q_0(x)&=0,& q_1(x)&=1,& q_{n+1}(x)&=2xq_n(x)-q_{n-1}(x).\\
\end{align*}
\begin{table}[ht]\label{table}
\centering
\begin{tabular}{|l|l|l|}\hline
$n$ & $p_{n}(x)$ & $q_{n}(x)$\\
\hline $0$ & $1$ & $0$ \\
\hline $1$ & $x$ & $1$ \\
\hline $2$ & $2x^2-1$ & $2x$ \\
\hline $3$ & $4x^3-3x$ & $4x^2 - 1$ \\
\hline $4$ & $8x^4 - 8x^2 + 1$ & $8x^3 -4x$ \\
\hline $5$ & $16x^5 -20 x^3 +5x$ & $16x^4 -12x^2 +1$ \\
\hline $6$ & $32x^6-48x^4+18x^2-1$ & $32x^5-32x^3+6x$ \\
\hline $7$ & $64x^7-112x^5+56x^3-7x$ & $64x^6 -80x^4+24x^2-1$ \\
\hline $8$ & $128x^8-256x^6+160x^4-32x^2+1$ & $128x^7-192x^5+80x^3-8x$\\
\hline
\end{tabular}
\caption{Polynomials $p_n(x)$ and $q_n(x)$ for $n=0,\ldots, 8$}
\end{table}

In \cite{matiya} is proved the following congruence for all $a\in \Z-\{0,-1\}$ and $n\geq 1$.
\begin{equation*}
q_{2n}(a)\equiv 0\mod p_n(a),
\end{equation*}
This implies that for all $a\in \Z$ with $p_n(a)\not=0$ then $p_n(a)|q_{2n}(a)$. Can we deduce $p_n(x)|q_{2n}(x)$ as polynomials? We can check this for example when $n=4$: note that $q_8(x)=128x^7-192x^5+80x^3-8x=8 x (2 x^2-1) (8 x^4-8x^2+1)$ and $p_4(x)=8x^4 - 8x^2 + 1$ which clearly shows $p_4(x)|q_8(x)$. It is straightforward to prove that for all $n\geq 1$ the polynomials $p_n(x)$ are primitive with $\deg p_n \geq 1$.  It follows from Proposition \ref{dringepp} that for all $n\geq 1$, $p_n(x)|q_{2n}(x)$. 
\end{example}

\end{document}